\newtheorem{thm}{Theorem}[section]
\newtheorem{lem}{Lemma}[section]
\newtheorem{cor}{Corollary}[section]
\newenvironment {proof} {\noindent{\em Proof.}}{\hspace*{\fill}$\Box$\par\vspace{4mm}}
\def\qed{\hfill \nopagebreak\rule{5pt}{8pt}}
\title{Colorful monochromatic connectivity\\ of random graphs\footnote{Supported by NSFC No.11371205 and PCSIRT.}}
\author{Ran Gu, Xueliang Li, Zhongmei Qin\\
{\small  Center for Combinatorics and LPMC-TJKLC}\\
{\small Nankai University, Tianjin 300071, P.R. China}\\
{\small Email: guran323@163.com, lxl@nankai.edu.cn, qinzhongmei90@163.com}\\
}
\date{}
\begin{document}
\maketitle
\begin{abstract}
An edge-coloring of a connected graph $G$ is called a {\it
monochromatic connection coloring} (MC-coloring, for short),
introduced by Caro and Yuster, if there is a monochromatic path
joining any two vertices of the graph $G$. Let $mc(G)$ denote the
maximum number of colors used in an MC-coloring of a graph $G$. Note
that an MC-coloring does not exist if $G$ is not connected, and in
this case we simply let $mc(G)=0$. We use $G(n,p)$ to denote the
Erd\"{o}s-R\'{e}nyi random graph model, in which each of the
$\binom{n}{2}$ pairs of vertices appears as an edge with probability
$p$ independently from other pairs. For any function $f(n)$
satisfying $1\leq f(n)<\frac{1}{2}n(n-1)$, we show that if $\ell n
\log n\leq f(n)<\frac{1}{2}n(n-1)$ where $\ell\in \mathbb{R}^+$,
then $p=\frac{f(n)+n\log\log n}{n^2}$ is a sharp threshold function
for the property $mc\left(G\left(n,p\right)\right)\ge f(n)$; if
$f(n)=o(n\log n)$, then $p=\frac{\log n}{n}$ is a sharp threshold
function for the property
$mc\left(G\left(n,p\right)\right)\ge f(n)$. \\[2mm]
\textbf{Keywords:} coloring; monochromatic connection; connectivity; random graphs.\\
\textbf{AMS subject classification 2010:} 05C15, 05C40, 05C80.\\
\end{abstract}

\section{Introduction}

All graphs in this paper are undirected, finite and simple. We
follow \cite{BM} for graph theoretical notation and terminology not
defined here. Let $G$ be a nontrivial connected graph with an {\it
edge-coloring} $c : E(G)\rightarrow \{1, 2, \ldots, t\}, \ t \in
\mathbb{N}$, where adjacent edges may have the same color. A path of
$G$ is said to be a {\it rainbow path} if no two edges on the path
have the same color. A connected graph is {\it rainbow connected} if
there is a rainbow path connecting any two vertices. An
edge-coloring of a connected graph is called a {\it rainbow
connection coloring} if it makes the graph rainbow connected. The
concept of rainbow connection of graphs was introduced by Chartrand
et al. in \cite{CJMZ}. The rainbow connection number of a connected
graph $G$, is the smallest number of colors that are needed in order
to make $G$ rainbow connected. Recently, the rainbow connection
colorings have been well-studied, and for details we refer to
\cite{LS, LSS}.

In 2011, Caro and Yuster \cite{CY} introduced a natural counterpart
question of rainbow connection colorings, which is called the
monochromatic connection coloring. An edge-coloring of a connected
graph $G$ is called a {\it monochromatic connection coloring}
(MC-coloring, for short) if there is a monochromatic path joining
any two vertices. Let $mc(G)$ denote the maximum number of colors
used in an MC-coloring of a graph $G$, which called the {\it
monochromatic connection number} of $G$. Note that an MC-coloring
does not exist if $G$ is not connected, and in this case we simply
let $mc(G)=0$. Denote by $n$ and $m$ the number of vertices and
edges of graph $G$, respectively. Note that by simply coloring the
edges of a spanning tree of $G$ with one color, and assigning the
remaining edges other distinct colors, we obtain an MC-coloring of
$G$, and this MC-coloring provides a straightforward lower bound for
$mc(G)$, which is summarized that as a theorem below.
\begin{thm}\label{low}
For any connected graph $G$, $mc(G)\ge m-n + 2$.
\end{thm}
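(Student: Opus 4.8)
The plan is to prove the bound constructively: rather than analyzing all possible MC-colorings, I would simply exhibit one particular MC-coloring that uses exactly $m-n+2$ colors. Since $mc(G)$ is \emph{defined} as the maximum number of colors over all MC-colorings, producing a single valid coloring with $m-n+2$ colors suffices to establish $mc(G)\ge m-n+2$. The construction is precisely the one foreshadowed in the text, so the work reduces to writing it down and verifying the two things that matter: that the coloring really is an MC-coloring, and that it uses the claimed number of colors.

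First I would fix a spanning tree $T$ of $G$; this exists because $G$ is connected, and since $T$ is a tree on $n$ vertices it has exactly $n-1$ edges. Next I would color every edge of $T$ with a single color, say color $1$, and then assign to each of the remaining $m-(n-1)=m-n+1$ edges its own pairwise-distinct new color, say colors $2,3,\ldots,m-n+2$. The total number of colors used is therefore
\[
1+(m-n+1)=m-n+2.
\]
This handles the counting step.

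It then remains to check that this is indeed an MC-coloring. Given any two vertices $u,v\in V(G)$, the spanning tree $T$ contains a (unique) path $P$ joining $u$ and $v$, and by construction every edge of $P$ has color $1$; hence $P$ is a monochromatic path between $u$ and $v$. As $u$ and $v$ were arbitrary, the coloring admits a monochromatic path between every pair of vertices, so it is a valid MC-coloring. Combining this with the color count yields $mc(G)\ge m-n+2$. I do not expect any genuine obstacle here: the argument is elementary, and the only points requiring care are invoking connectivity to guarantee the spanning tree and correctly counting the non-tree edges as $m-n+1$.
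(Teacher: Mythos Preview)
Your proposal is correct and follows exactly the approach the paper itself sketches just before stating the theorem: color a spanning tree with one color and give each remaining edge a distinct new color, yielding an MC-coloring with $m-n+2$ colors. There is nothing to add or change.
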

In particular, $mc(G)=m-n+2$ whenever $G$ is a tree.  Caro and
Yuster \cite{CY} also showed that there are dense graphs that still
meet this lower bound.

\begin{thm} \cite{CY}
Let $G$ be a connected graph with $n>3$. If $G$ satisfies any of the
following properties, then $mc(G)=m-n+2$.

(a) $\overline{G}$ (the complement of $G$) is 4-connected.

(b) $G$ is triangle-free.

(c) $\Delta(G)< n-\frac{2m-3(n-1)}{n-3}$. In particular, this holds
if $\Delta(G) \leq (n+1)/2$, and also holds if $\Delta(G) \leq
n-2m/n $.

(d) The diameter of $G$ is at least 3.

(e) $G$ has a cut vertex.
\end{thm}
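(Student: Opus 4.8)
Since Theorem \ref{low} already gives $mc(G)\ge m-n+2$, the whole content is the matching upper bound $mc(G)\le m-n+2$ under each hypothesis. My first move is the standard extremal reduction. In an MC-coloring with the maximum number of colors, no color class can contain a cycle — deleting one cycle-edge and giving it a brand new color destroys no monochromatic path and increases the number of colors — and no color class can be disconnected, since recoloring one of its components with a fresh color has the same effect. Hence I may assume every color class is a tree $T_i$, and, because any two vertices must be joined by a monochromatic path, the vertex sets $A_i=V(T_i)$ must cover every pair of vertices; equivalently they cover every edge of the complement $\overline G$. Writing $a_i=|A_i|$ and using $\sum_i(a_i-1)=m$, the number of colors is exactly $m-\sum_i(a_i-2)$. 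Thus the theorem is equivalent to the purely combinatorial statement that $\sum_i(a_i-2)\ge n-2$ for every family of edge-disjoint subtrees of $G$ whose vertex sets cover $E(\overline G)$, and a single spanning tree shows this lower bound is tight in general.

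Set $\phi=\sum_i(a_i-2)$ and $Q=\bigcup_iT_i\subseteq G$, so that $\phi=|E(Q)|-r$ where $r$ is the number of trees. A vertex lying in no $A_i$ meets no non-edge and is therefore universal in $G$, while two vertices in different components of $Q$ cannot form a non-edge (a non-edge must sit inside a single $A_i$); hence $G$ is complete between the components of $Q$ and between the universal vertices and everything else, and every non-edge inside a component is covered by trees living in that same component. This localizes the estimate to the components of $Q$, and the heart of the matter is a lower bound of the form $|E(Q_j)|-r_j\ge n_j-2$ for each component. The genuine difficulty is that several overlapping trees can be strictly cheaper than one spanning tree: for instance a two-tree cover $A_1,A_2$ of $\overline G$ already beats the spanning cost precisely when the two ``missing'' sets $V\setminus A_1$ and $V\setminus A_2$ can be separated in $\overline G$ by a single vertex, i.e. when $\overline G$ has a cut vertex. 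Thus ruling out cheap covers is essentially a connectivity/separator condition on $\overline G$, and the plan is to push this analysis to $r\ge 3$ trees, using crucially that the $T_i$ must be realizable as edge-disjoint subtrees of $G$ — it is this packing restriction that caps how many trees can cooperate and that rescues cases (such as the four-cycle) where $\overline G$ is disconnected yet the cost is still forced up to $n-2$.

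It then remains to check that each of (a)--(e) blocks every cheap cover. Conditions (a), (d) and (e) I would treat structurally. If $\overline G$ is $4$-connected it has no small separator, so by the analysis above no bounded family of overlapping trees can undercut the spanning cost, and the constant $4$ is exactly what the edge-disjoint packing argument consumes. If $\mathrm{diam}(G)\ge 3$ then $\overline G$ has diameter at most $2$, so $\overline G$ is connected and spanning and a pair $u,v$ at distance $\ge 3$ forces any covering tree through them to contain a $u$--$v$ path of length $\ge 3$; and a cut vertex of $G$ turns the corresponding vertex into the unique gateway of a complete join in $\overline G$, forcing every covering tree to route through it. Conditions (b) and (c) are the counting cases: triangle-freeness restricts the subtrees of $G$ so that each covers few non-edges, while the degree inequality in (c) is the quantitative shadow of the same packing bound, controlling how many non-edges the high-degree part of $G$ can hide. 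I expect the main obstacle to be exactly the realizability-aware lower bound of the second paragraph — bounding the number of cooperating edge-disjoint subtrees of $G$ — since this is where both the mysterious constant $4$ in (a) and the precise degree threshold in (c) originate; the covering condition on $\overline G$ alone is not enough, and the packing constraint inside $G$ must be used in a sharp way.
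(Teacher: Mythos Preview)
The paper does not contain a proof of this theorem at all: it is quoted verbatim from Caro and Yuster \cite{CY} as background, with no argument supplied. Consequently there is no ``paper's own proof'' to compare your proposal against; the comparison you were asked to make is vacuous here.

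As to the proposal itself: your opening reduction is the correct one and matches the framework in \cite{CY} --- in an extremal MC-coloring every color class is a tree, the trees partition $E(G)$, and the color count equals $m-\sum_i(a_i-2)$, so the task is exactly the inequality $\sum_i(a_i-2)\ge n-2$. After that, however, what you have written is a plan rather than a proof. The localization to components of $Q$ is reasonable, but the per-component bound $|E(Q_j)|-r_j\ge n_j-2$ is asserted, not proved, and this is precisely the nontrivial content. Your treatments of (a)--(e) are then informal to the point of being non-arguments: for (a) you say the constant $4$ ``is exactly what the edge-disjoint packing argument consumes'' without exhibiting that argument; for (b) and (c) you only gesture at ``counting''; for (d) you invoke a $u$--$v$ path of length $\ge 3$ without turning it into an inequality on $\sum(a_i-2)$; and for (e) the claim about routing through the cut vertex is not converted into a bound. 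You yourself flag the realizability-aware packing bound as ``the main obstacle'' you ``expect'' to face --- that is an honest assessment, but it means the hard step is still entirely missing. To turn this into an actual proof you would need to supply, case by case, the concrete inequality showing that no family of edge-disjoint subtrees of $G$ covering $E(\overline G)$ can have $\sum(a_i-2)<n-2$; the original paper \cite{CY} does this with separate, case-specific arguments, and you have not reproduced any of them.
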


For the  upper bounds of $mc(G)$, Caro and Yuster \cite{CY} gave the
following result:

\begin{thm}\cite{CY}
Let $G$ be a connected graph. Then

(a) $mc(G) \leq m-n+\chi(G)$, where $\chi(G)$ is the vertex
chromatic number of $G$.

(b) if $G$ is not $r$-connected, then $mc(G) \leq m-n+r$.
\end{thm}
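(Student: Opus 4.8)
The plan is to prove both bounds at once through a single structural reduction. First I would fix an MC-coloring of $G$ using $mc(G)$ colors and observe, by a routine exchange argument, that in a coloring with the maximum number of colors every color class is a \emph{tree}: if some class contained a cycle I could recolor one of its cycle-edges with a fresh color without destroying any monochromatic connection, and if some class were a disconnected forest I could split off one component into a fresh color, either way contradicting maximality. Writing the color classes as trees $T_1,\dots,T_k$ we have $\sum_i (|V(T_i)|-1)=m$, so that
$$m-mc(G)=\sum_i\big(|V(T_i)|-2\big),$$
and the whole problem reduces to bounding this ``cost'' from below: part (a) becomes $\sum_i(|V(T_i)|-2)\ge n-\chi(G)$ and part (b) becomes $\sum_i(|V(T_i)|-2)\ge n-r$. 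The key simplification is that single-edge classes contribute $0$ and only the multi-edge trees do any work; moreover a pair $\{u,v\}$ that is an edge of $G$ is automatically monochromatically connected by its own edge, so the multi-edge trees are exactly what must \emph{cover all non-edges} of $G$ (every non-adjacent pair must lie inside some $V(T_i)$, whose unique path then joins them monochromatically).

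For part (a) I would then pass to the subgraph $B$ formed by the edge-disjoint union of the multi-edge trees, let $C_1,\dots,C_q$ be its connected components and $V'=\bigcup_\ell C_\ell$. Since every non-edge of $G$ lies inside a single tree, hence inside a single $C_\ell$, any two vertices in different components, as well as any vertex outside $V'$ (which must be universal), are joined by an edge of $G$. Thus $G$ contains the complete multipartite graph on the parts $C_1,\dots,C_q$ together with the singletons of $V\setminus V'$, and by additivity of the chromatic number over a join,
$$\chi(G)\ \ge\ (n-|V'|)+\sum_{\ell=1}^{q}\chi\big(G[C_\ell]\big).$$
It then suffices to prove the per-component inequality $\chi(G[C_\ell])\ge |C_\ell|-\mathrm{cost}_\ell$, where $\mathrm{cost}_\ell$ is the contribution of the trees making up $C_\ell$; summing over $\ell$ and substituting gives exactly $\mathrm{cost}\ge n-\chi(G)$.

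The main obstacle is this per-component lemma, which I would attack by induction on the number of trees forming the component. When $C_\ell$ is a single tree the claim is just $\chi(G[C_\ell])\ge 2$, which holds because the tree contains an edge; the difficulty is the inductive step, where removing one tree from an overlapping bundle may disconnect $C_\ell$ and split off universal vertices, forcing me to re-apply the complete-multipartite/join argument inside the component and to track carefully how shared vertices redistribute the cost. Part (b) runs in parallel: given that $G$ is not $r$-connected I would fix a vertex cut $S$ with $|S|\le r-1$, note that every pair lying in different components of $G-S$ is a non-edge and so must be covered by a tree passing through $S$, and then bound the cost by contracting each component of $G-S$ to a single vertex and counting the cost of connecting the resulting super-vertices through $S$; here the small size of the cut plays the role that $\chi(G)$ plays in part (a). I expect the bookkeeping of overlaps through $S$ to be the analogous hard point.
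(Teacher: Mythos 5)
This theorem is quoted by the paper from Caro and Yuster without proof, so I can only judge your argument on its own terms. Your setup is sound: the exchange argument showing that in an extremal MC-coloring every color class is a tree, the identity $m-mc(G)=\sum_i\big(|V(T_i)|-2\big)$, the observation that only the multi-edge trees matter and that they must cover every non-edge, and the join decomposition $\chi(G)\ge (n-|V'|)+\sum_{\ell}\chi\big(G[C_\ell]\big)$ are all correct. The genuine gap is that both inequalities you reduce the theorem to are left unproved, and the strategies you sketch for them do not work as described. For (a), your per-component inequality $\chi(G[C_\ell])\ge |C_\ell|-\mathrm{cost}_\ell$ is true, but the proposed induction on the number of trees breaks at exactly the point you flag: after deleting one tree from the bundle, the remaining trees no longer cover all non-edges inside the remaining vertex set (the deleted tree was covering some of them), so the inductive hypothesis does not apply to what is left, and tracking shared vertices does not by itself repair this. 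For (b), contracting each component of $G-S$ to a super-vertex discards the quantity you need: a contraction argument can only certify a cost of about (number of components of $G-S$) minus one, whereas the required bound is $n-|S|-1$, which counts every vertex of $G-S$.

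Both gaps close with a single counting lemma that replaces the induction entirely. Fix an optimal proper coloring of $G$ with classes $I_1,\dots,I_{\chi}$. For each $j$, every pair inside $I_j$ is a non-edge, hence lies in a common nontrivial tree, so the traces $V(T_i)\cap I_j$ form a connected covering system of $I_j$ and $|I_j|-1\le\sum_i\max\big(|V(T_i)\cap I_j|-1,0\big)$. Summing over $j$, and using that each nontrivial tree contains an edge and therefore meets at least two classes $I_j$, gives $n-\chi(G)\le\sum_i\big(|V(T_i)|-2\big)=m-mc(G)$, which is (a). For (b), take a cut $S$ with $|S|\le r-1$ separating $V\setminus S$ into nonempty sets $A$ and $B$; every tree meeting both $A$ and $B$ must contain a vertex of $S$ (there are no $A$--$B$ edges in $G$), so its trace on $A\cup B$ has at most $|V(T_i)|-1$ vertices, and the same connected-cover count applied to $A\cup B$ yields $n-|S|-1\le\sum_i\big(|V(T_i)|-2\big)$, hence $mc(G)\le m-n+|S|+1\le m-n+r$.
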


In this paper, we study the number $mc(G)$ for random graphs. The
most frequently occurring probability model of random graphs is the
{\it Erd\"{o}s-R\'{e}nyi random graph model} $G(n,p)$ \cite{ER}.
The model $G(n,p)$ consists of all graphs with $n$ vertices in which
the edges are chosen independently and with probability $p$. We say
an event $\mathcal{A}$ happens \textit{with high  probability} if
the probability that it happens approaches $1$ as $n\rightarrow
\infty $, i.e., $Pr[\mathcal{A}]=1-o_n(1)$. Sometimes, we say
\textit{w.h.p.} for short. We will always assume that $n$ is the
variable that tends to infinity.

Let $G$, $H$ be two graphs on $n$ vertices. A property $P$ is said
to be \emph{monotone} if whenever $G\subseteq H$ and $G$ satisfies
$P$, then $H$ also satisfies $P$. For a graph property $P$, a
function $p(n)$ is called a {\it threshold function} of $P$ if:
\begin{itemize}
\item for every $r(n) = \omega(p(n))$, $G(n, r(n))$ w.h.p. satisfies $P$; and

\item for every $r'(n) = o(p(n))$, $G(n, r'(n))$ w.h.p.
does not satisfy $P$.
\end{itemize}

Furthermore, $p(n)$ is called a sharp threshold function of $P$ if
there exist two positive constants $c$ and $C$ such that:
\begin{itemize}
  \item for every $r(n) \geq C\cdot p(n)$, $G(n, r(n))$ w.h.p. satisfies $P$; and
  \item for every $r'(n) \leq c\cdot p(n)$, $G(n, r'(n))$ w.h.p.
does not satisfy $P$.
\end{itemize}

In the extensive study of the properties of random graphs, many
researchers observed that there are sharp threshold functions for
various natural graph properties. It is well-known that all monotone
graph properties have sharp threshold functions; see \cite{BT} and
\cite{FK}. For the property $rc(G(n, p)) \le 2$, Caro et al.
\cite{Caro} proved that $p = \sqrt{\log n/n}$ is the sharp threshold
function. He and Liang \cite{HL} studied further the rainbow
connectivity of random graphs. Specifically, they obtained that
$(\log n)^{(1/d)}/n^{(d-1)/d}$ is the sharp threshold function for
the property $rc(G(n, p)) \le d$,  where $d$ is a constant.

For the monochromatic connectivity of a graph, one aims to find as
many colors as possible to keep the graph monochromatically
connected. Also, it is natural to ask what kind of graphs have large
$mc(G)$. That is, we can use a great many colors to make the graph
monochromatically connected. Furthermore, what will happen if we
require the number of colors to relate with the order of the graph ?
So it is interesting to consider the threshold function of the
property $mc\left(G\left(n,p\right)\right)\ge f(n)$, where $f(n)$ is
a function of $n$. For any graph $G$ with $n$ vertices and any
function $f(n)$, having $mc(G) \ge f(n)$ is a monotone graph
property (adding edges does not destroy this property), so it has a
sharp threshold function. Realize that for the sharp threshold
function for the rainbow connectivity of random graphs, the known
results all require that the number of colors is independent of the
order of the random graph, but our result dose not have that
restriction. Our main result is as follows.
\begin{thm}\label{thm1}
Let $f(n)$ be a function satisfying $1\leq f(n)<\frac{1}{2}n(n-1)$. Then
\begin{equation*}
p=
\left\{
  \begin{array}{ll}
   \frac{f(n)+n\log\log n}{n^2} & \hbox{ if $\ell n \log n\leq f(n)
   <\frac{1}{2}n(n-1)$, where $\ell\in \mathbb{R}^+$,} \\
   \frac{\log n}{n} & \hbox{ if  $f(n)=o(n\log n)$.} \end{array}
\right.
\end{equation*}
is a sharp threshold function for the property
$mc\left(G\left(n,p\right)\right)\ge f(n)$.
\end{thm}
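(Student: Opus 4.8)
The plan is to bracket $mc(G(n,p))$ tightly between two quantities that depend only on the edge count $m$ and on connectivity, and then to reduce the whole statement to the (well-understood) concentration of $m$ together with the classical connectivity threshold of $G(n,p)$. Concretely, for a connected graph Theorem~\ref{low} gives $mc(G)\ge m-n+2$, while trivially $mc(G)\le m$ since each colour occupies at least one edge; and if $G$ is disconnected then $mc(G)=0$. Thus the event $mc(G)\ge f(n)$ is sandwiched: it is implied by $\{G\text{ connected}\}\cap\{m\ge f(n)+n-2\}$ and it implies $\{G\text{ connected}\}\cap\{m\ge f(n)\}$. Since $m\sim\mathrm{Bin}(\binom{n}{2},p)$ is sharply concentrated and the connectivity threshold of $G(n,p)$ is $\tfrac{\log n}{n}$, it suffices to locate where $m$ crosses $f(n)$, and the window of width $n-2$ between the two bounds will be absorbed by the $n\log\log n$ correction built into $p$.

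For the range $\ell n\log n\le f(n)<\tfrac12 n(n-1)$ I would take $c=1$ and $C=2$ (valid while $2p\le 1$; the remaining sliver is treated below). On the upper side, with $r(n)\ge 2p$ we have $\mathbb{E}[m]=\binom{n}{2}r(n)=f(n)+n\log\log n-O(n)$, so a Chernoff bound shows $m\ge f(n)+n$ with high probability: here the extra term $n\log\log n$ is exactly what is needed, since it dominates both the spanning-tree deficit $n$ coming from $mc\ge m-n+2$ and the binomial standard deviation $\Theta(\sqrt{f(n)})=O(n)$. As $r(n)$ also sits above $\tfrac{\log n}{n}$ once $C$ is fixed large enough in terms of $\ell$, the graph is connected, and $mc(G)\ge m-n+2\ge f(n)$. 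On the lower side, with $r'(n)\le p$ we get $\mathbb{E}[m]\approx\tfrac12\bigl(f(n)+n\log\log n\bigr)<f(n)$ because $f(n)\ge \ell n\log n\gg n\log\log n$; Chernoff then gives $m<f(n)$ with high probability, whence $mc(G)\le m<f(n)$.

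For $f(n)=o(n\log n)$ the threshold is simply the connectivity threshold $\tfrac{\log n}{n}$. If $r(n)\ge C\tfrac{\log n}{n}$ with $C>1$, then $G$ is connected with high probability and $m=\Theta(n\log n)$, so $mc(G)\ge m-n+2\ge f(n)$ because $f(n)=o(n\log n)$. If $r'(n)\le c\tfrac{\log n}{n}$ with $c<1$, then $G$ is disconnected with high probability, so $mc(G)=0<1\le f(n)$. Both directions are immediate from the known two-sided sharpness of the connectivity threshold.

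The hard part is the dense end of the first range, where $f(n)$ approaches $\tfrac12 n(n-1)$ and $p\to 1$. There the sandwich window $[\,m-n+2,\ m\,]$ has width $\sim n$, comparable both to the standard deviation of $m$ and to the gap between the two edge thresholds, so the crude bounds no longer separate cleanly; moreover one cannot inflate $C$ freely, since $Cp$ must remain below $1$. To handle this I would sharpen the lower bound on $mc$: when $p$ is bounded away from $1$ the complement $\overline{G}=G(n,1-p)$ has density $\Omega(1)\gg\tfrac{\log n}{n}$ and is therefore $4$-connected with high probability, so the Caro--Yuster identity $mc(G)=m-n+2$ holds \emph{exactly}, collapsing the window and reducing everything to the concentration of $m$. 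The genuinely delicate sub-case is $f(n)=\tfrac12 n(n-1)-O(n\log\log n)$, where $\overline{G}$ need not be $4$-connected and $2p>1$; this must be treated directly via the near-completeness of $G$. It is precisely the calibration of the $n\log\log n$ term against the $\Theta(n)$-scale fluctuations of $m$ that makes the stated function a \emph{sharp} threshold rather than merely order-correct, and verifying this calibration is, I expect, the main obstacle.
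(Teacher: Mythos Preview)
Your core strategy---sandwich $mc(G)$ between $m-n+2$ (Theorem~\ref{low}) and $m$, apply Chernoff to the binomial edge count, and invoke the connectivity threshold---is exactly what the paper does. The paper uses the upper bound $mc(G)\le m-n+\delta(G)+1$ (Theorem~\ref{upper}) rather than your trivial $mc(G)\le m$, but it then bounds $\delta(G)<n$, so the two are equivalent in effect. Case~2 is handled identically.

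Where you diverge is in manufacturing a ``hard part'' at the dense end that the paper simply does not confront. Two points. First, a miscalculation: since $f(n)<\tfrac12 n(n-1)$, one always has $p=\tfrac{f(n)+n\log\log n}{n^2}<\tfrac12+o(1)$, so $p$ never approaches~$1$. Second, the paper does not try to keep $Cp\le 1$; it takes $C=5$ (or $5/\ell$) and uses only the crude Chernoff estimate $m\ge\mu_1/2$, which already gives $m-n+2\ge\tfrac54\cdot\tfrac{n-1}{n}f(n)-n\ge f(n)$ with a constant-factor margin. When $Cp>1$ the clause ``for every $r(n)\ge Cp$'' in the definition of sharp threshold is vacuous (equivalently, $G(n,1)=K_n$ has $mc=\binom{n}{2}>f(n)$), so nothing further is required. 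Your detour through the $4$-connectivity of $\overline{G}$ and the exact identity $mc(G)=m-n+2$ is therefore unnecessary: the paper's proof runs uniformly over the whole range with the coarse bounds, and the $n\log\log n$ term is never actually pitted against the $\Theta(n)$ fluctuations of $m$---the constant factor between $c=1$ and $C=5$ does all the work.

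A minor inconsistency: you fix $C=2$ but then say $C$ must be ``large enough in terms of $\ell$'' to force $Cp\ge\tfrac{\log n}{n}$. The paper resolves this by taking $C=5/\ell$ when $\ell<1$; you should do likewise rather than leave $C$ ambiguous.
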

{\bf Remark.} Note that $mc\left(G\left(n,p\right)\right)\le
\frac{1}{2}n(n-1)$ for any probability function $0\le p\le 1$, and
$mc\left(G\left(n,p\right)\right)= \frac{1}{2}n(n-1)$ if and only if
$G(n, p)$ is isomorphic to the complete graph $K_n$. Hence we only
concentrate on the case $f(n)<\frac{1}{2}n(n-1)$.

\section{Proof of Theorem \ref{thm1}}

In \cite{CY}, Caro and Yuster gave the following upper bound for
$mc(G)$.
\begin{thm}\label{upper}
If the minimum degree of $G$ is $\delta(G) = s$, then $mc(G)\le
|E(G)|-|V(G)| + s+1$.
\end{thm}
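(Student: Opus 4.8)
The plan is to fix an MC-coloring of $G$ realizing $c = mc(G)$ colors and to prove directly the equivalent inequality $m - c \ge n - 1 - s$, which rearranges to the claimed bound $mc(G) \le m - n + s + 1$. Writing the color classes as $E_1,\dots,E_c$ with $e_i = |E_i|$, I would use that $\sum_{i=1}^{c} e_i = m$, so that $m - c = \sum_{i=1}^{c}(e_i - 1)$; the whole task then reduces to producing a lower bound of $n - 1 - s$ for this sum. The idea is to localize the whole argument at a single vertex of minimum degree, where the scarcity of incident edges limits how many colors can be responsible for reaching the rest of the graph.

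First I would fix a vertex $v$ with $\deg_G(v) = s$ and record the set of colors appearing on the $s$ edges incident with $v$; call them $i_1,\dots,i_t$, so that $t \le s$. For each such color $i_j$ let $C_j$ be the vertex set of the connected component containing $v$ in the spanning subgraph $G_{i_j} = (V, E_{i_j})$. The crucial observation is that $\bigcup_{j=1}^{t} C_j = V$: for any vertex $u$, the MC-coloring supplies a monochromatic $v$--$u$ path, and the first edge of that path is incident with $v$, so its color is one of $i_1,\dots,i_t$, which forces $u \in C_j$ for the corresponding $j$.

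Next I would choose, for each $j$, a spanning tree $T_j$ of the component $C_j$ inside $G_{i_j}$, so that $|E(T_j)| = |C_j| - 1$ and $E(T_j) \subseteq E_{i_j}$. Because the $T_j$ belong to distinct color classes they are pairwise edge-disjoint, and because every $C_j$ contains $v$ while $\bigcup_j C_j = V$, the union $\bigcup_{j=1}^{t} T_j$ is a connected spanning subgraph of $G$ and hence has at least $n-1$ edges. Edge-disjointness then yields $\sum_{j=1}^{t}(|C_j| - 1) \ge n - 1$. Finally, since $E_{i_j} \supseteq E(T_j)$ gives $e_{i_j} \ge |C_j| - 1$, I would conclude
\[
m - c = \sum_{i=1}^{c}(e_i - 1) \ge \sum_{j=1}^{t}(e_{i_j} - 1) \ge \sum_{j=1}^{t}(|C_j| - 2) = \Big(\sum_{j=1}^{t}(|C_j| - 1)\Big) - t \ge (n-1) - s,
\]
where the last step uses both $\sum_{j}(|C_j|-1) \ge n-1$ and $t \le s$. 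This is exactly $mc(G) \le m - n + s + 1$.

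I expect the only delicate points to be structural rather than computational. The components $C_j$ may overlap in vertices other than $v$, so $\bigcup_j T_j$ may contain cycles; this does not hurt the argument, since I only need a lower bound on its edge count, which the ``connected spanning subgraph has at least $n-1$ edges'' inequality supplies. One must also handle the colors not appearing at $v$ correctly: they contribute nonnegatively to $\sum_i(e_i - 1)$, so discarding them (i.e.\ restricting the sum to $j = 1,\dots,t$) is legitimate and is precisely what lets the minimum-degree bound $t \le s$ enter.
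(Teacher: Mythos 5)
Your proof is correct. Note that the paper itself offers no proof of this statement: it is quoted as a known theorem of Caro and Yuster \cite{CY} and used as a black box, so the only proof to compare against is the original one, and your argument is essentially that argument. You localize at a vertex $v$ of minimum degree $s$, observe that the (at most $s$) colors on edges incident with $v$ must already connect $v$ monochromatically to every other vertex, and then charge each such color class at least $|C_j|-1$ edges for spanning its component, so the total waste $m-c=\sum_i(e_i-1)$ is at least $(n-1)-s$. All the delicate points you flag are handled correctly: the color classes are disjoint, so the trees $T_j$ are edge-disjoint; each contains $v$ and their union spans $V$, giving $\sum_j\left(|C_j|-1\right)\ge n-1$; and the colors absent at $v$ contribute nonnegatively since every color class is nonempty. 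A minor remark: your argument needs no optimality of the coloring and no structural lemma about color classes being trees (which Caro and Yuster develop for their finer results); for this particular bound that machinery is indeed unnecessary.
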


In this paper, we use the following version of  Chernoff bound:

\begin{lem}\label{lem1}\cite{AS} \textbf{(Chernoff Bound)}
If $X$ is a binomial random variable with expectation $\mu$, and
$0<\delta<1$, then
\[\Pr [X < (1 - \delta )\mu ] \le \exp \left( { - \frac{{{\delta ^2}\mu }}{2}} \right)\]
and if $\delta>0$, then
\[\Pr [X > (1 + \delta )\mu ] \le \exp \left( { - \frac{{{\delta ^2}\mu }}{2+\delta}} \right).\]
\end{lem}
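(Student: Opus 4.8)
The plan is to use the classical exponential-moment (Chernoff) method, writing $X\sim\mathrm{Bin}(n,q)$ as a sum $X=\sum_{i=1}^{n}X_i$ of independent Bernoulli$(q)$ indicators, so that $\mu=\mathbb{E}[X]=nq$. For the upper tail, the first step is to fix an arbitrary parameter $t>0$ and apply Markov's inequality to the nonnegative random variable $e^{tX}$:
\[
\Pr[X>(1+\delta)\mu]=\Pr\bigl[e^{tX}>e^{t(1+\delta)\mu}\bigr]\le e^{-t(1+\delta)\mu}\,\mathbb{E}\bigl[e^{tX}\bigr].
\]
By independence the moment generating function factorises, and using $1+x\le e^{x}$ I would bound
\[
\mathbb{E}\bigl[e^{tX}\bigr]=\bigl(1+q(e^{t}-1)\bigr)^{n}\le e^{\mu(e^{t}-1)},
\]
so that $\Pr[X>(1+\delta)\mu]\le\exp\bigl(\mu(e^{t}-1)-t(1+\delta)\mu\bigr)$. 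Only independence is used here, so the same argument in fact covers any sum of independent indicators, not just the strictly binomial case.

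The second step is to optimise the free parameter. Minimising the exponent $\mu(e^{t}-1)-t(1+\delta)\mu$ over $t>0$ gives the choice $t=\log(1+\delta)$, yielding the sharp bound $\bigl(e^{\delta}/(1+\delta)^{1+\delta}\bigr)^{\mu}$, i.e.\ the exponent $\mu\bigl[\delta-(1+\delta)\log(1+\delta)\bigr]$. The lower tail is handled symmetrically by applying Markov's inequality to $e^{-tX}$ with $t>0$, bounding its expectation the same way; the analogous optimisation gives $t=-\log(1-\delta)$ and the bound $\bigl(e^{-\delta}/(1-\delta)^{1-\delta}\bigr)^{\mu}$, with exponent $-\mu\bigl[\delta+(1-\delta)\log(1-\delta)\bigr]$.

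What remains — and what I expect to be the only genuine work — is to weaken these two exact bounds into the clean forms stated in the lemma, which is purely a matter of elementary calculus. For the lower tail I would set $h(\delta)=\delta+(1-\delta)\log(1-\delta)-\tfrac12\delta^{2}$ and check that $h(0)=0$ while $h'(\delta)=-\log(1-\delta)-\delta\ge0$ on $[0,1)$ (since $-\log(1-\delta)\ge\delta$), giving $\delta+(1-\delta)\log(1-\delta)\ge\delta^{2}/2$ and hence $\Pr[X<(1-\delta)\mu]\le e^{-\delta^{2}\mu/2}$. For the upper tail the corresponding inequality is $(1+\delta)\log(1+\delta)-\delta\ge\delta^{2}/(2+\delta)$ for all $\delta>0$; the slightly awkward denominator $2+\delta$ is the one mildly delicate point, but it again follows by showing the difference vanishes at $\delta=0$ and has nonnegative derivative (which reduces, after differentiating, to the elementary estimate $(\delta+2)^{3}\ge 8(1+\delta)$). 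Combining these two inequalities with the optimised bounds produces exactly the two displayed estimates, completing the proof.
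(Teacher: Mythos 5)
The paper does not prove this lemma at all: it is quoted directly from Alon and Spencer \cite{AS}, so there is no internal proof to compare against, and your argument is precisely the standard exponential-moment proof given in that reference. Every step checks out --- the MGF bound $\mathbb{E}[e^{tX}]\le e^{\mu(e^t-1)}$, the optimal choices $t=\log(1+\delta)$ and $t=-\log(1-\delta)$, the lower-tail calculus reduction $-\log(1-\delta)\ge\delta$, and the upper-tail reduction of $(1+\delta)\log(1+\delta)-\delta\ge\delta^2/(2+\delta)$ to $(\delta+2)^3\ge 8(1+\delta)$, which holds since $(\delta+2)^3-8(1+\delta)=\delta^3+6\delta^2+4\delta\ge 0$ for $\delta\ge 0$.
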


Throughout the paper ``log" denotes the natural logarithm. The
following theorem is a classical result on the connectedness of a
random graph.
\begin{thm}\label{ppt}\cite{ER}
Let $p=(\log n +a)/n$. Then
\begin{equation*}
Pr[G(n,p)\  is \ connected)]\rightarrow
\left\{
  \begin{array}{ll}
   e^{-e^{-a}} & \hbox{ if $|a|=O(1)$,} \\
    0 & \hbox{  $a\rightarrow -\infty$}, \\
    1 & \hbox{ $a\rightarrow +\infty$.}
  \end{array}
\right.
\end{equation*}
\end{thm}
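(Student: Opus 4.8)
The plan is to reduce the connectivity question to the far simpler question of whether $G(n,p)$ contains an \emph{isolated vertex}, because at this edge density isolated vertices are the dominant obstruction to connectivity. Write $X_0$ for the number of isolated vertices of $G(n,p)$ and put $\lambda=e^{-a}$. First I would compute the expectation: each vertex is isolated with probability $(1-p)^{n-1}$, so $E[X_0]=n(1-p)^{n-1}$, and substituting $p=(\log n+a)/n$ together with $(1-p)^{n-1}=e^{-pn}(1+o(1))=(e^{-a}/n)(1+o(1))$ gives $E[X_0]\to\lambda$ whenever $a=O(1)$. This already pins down the right constant $e^{-a}$ appearing in the statement.

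The core of the argument is to upgrade this expectation to a distributional limit via the method of factorial moments. The probability that $k$ prescribed vertices are simultaneously isolated is $(1-p)^{k(n-1)-\binom{k}{2}}$, since the $k(n-1)$ potential incident edges double-count the $\binom{k}{2}$ edges inside the set; hence the $k$-th factorial moment $E[X_0(X_0-1)\cdots(X_0-k+1)]=(n)_k(1-p)^{k(n-1)-\binom{k}{2}}\to\lambda^k$ for every fixed $k$. These limits coincide with the factorial moments of a $\mathrm{Poisson}(\lambda)$ variable, so the method of moments yields $\Pr[X_0=0]\to e^{-\lambda}=e^{-e^{-a}}$, which is the case $|a|=O(1)$. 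For $a\to+\infty$ we have $E[X_0]\to0$, so Markov's inequality gives $\Pr[X_0\ge1]\to0$; for $a\to-\infty$ we have $E[X_0]\to\infty$, and a short second-moment computation shows $\mathrm{Var}(X_0)=(1+o(1))E[X_0]$, whence Chebyshev's inequality gives $\Pr[X_0=0]\to0$, i.e. w.h.p.\ an isolated vertex exists and $G$ is disconnected.

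The second and more delicate step is to show that the only way $G(n,p)$ can fail to be connected is through an isolated vertex, that is, $\Pr[G\text{ disconnected and }X_0=0]=o(1)$. If $G$ is disconnected it has a component whose vertex set $S$ satisfies $2\le|S|\le n/2$ (the case $|S|=1$ being an isolated vertex). I would bound the probability of such a component by a union bound: choose $S$ with $|S|=k$, require $S$ to contain a spanning tree (Cayley's formula supplies $k^{k-2}$ trees, each present with probability $p^{k-1}$), and require no edges between $S$ and its complement (probability $(1-p)^{k(n-k)}$), giving
\[
\Pr[\exists\ \text{component of size }k,\ 2\le k\le n/2]\ \le\ \sum_{k=2}^{\lfloor n/2\rfloor}\binom{n}{k}\,k^{k-2}\,p^{k-1}\,(1-p)^{k(n-k)}.
\]
Using $\binom{n}{k}\le(en/k)^k$ and $(1-p)^{k(n-k)}\le e^{-pk(n-k)}$, I would split the range into small $k$ (say $2\le k\le\log n$) and large $k$ ($\log n<k\le n/2$) and verify that each regime contributes $o(1)$, so the whole sum tends to $0$.

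Finally, combining the two parts gives $\Pr[G\text{ connected}]=\Pr[X_0=0]-\Pr[G\text{ disconnected},\,X_0=0]=\Pr[X_0=0]-o(1)$, so the limiting connection probability equals the limiting probability that $X_0=0$: namely $e^{-e^{-a}}$ when $|a|=O(1)$, the value $1$ when $a\to+\infty$, and $0$ when $a\to-\infty$. The hard part will be the summation in the third step over the full range $2\le k\le n/2$; the genuinely Poisson behaviour of the isolated vertices is comparatively routine, whereas the terms with $k$ near $n/2$ demand the careful factor-by-factor estimates indicated above to ensure the bound truly collapses to $o(1)$.
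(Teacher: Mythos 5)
There is nothing in the paper to compare your proof against: Theorem~\ref{ppt} is quoted from Erd\H{o}s and R\'enyi \cite{ER} as a known classical result, with no proof given, and its only role is to supply Corollary~\ref{cor} and the connectivity claims used in Theorems~\ref{t2} and~\ref{t3}. Judged on its own, your proposal is the standard proof of this classical theorem, and its structure is correct. The factorial-moment step is right: $k(n-1)-\binom{k}{2}$ does count the edges meeting a fixed $k$-set, so $E[X_0(X_0-1)\cdots(X_0-k+1)]=(n)_k(1-p)^{k(n-1)-\binom{k}{2}}\to e^{-ak}$ for each fixed $k$ when $a=O(1)$, and the method of moments then gives $\Pr[X_0=0]\to e^{-e^{-a}}$. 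The Markov bound for $a\to+\infty$ and the Chebyshev bound for $a\to-\infty$ (using $\mathrm{Var}(X_0)=E[X_0]+O(p)E[X_0]^2$) are both fine. The union bound $\sum_{k=2}^{\lfloor n/2\rfloor}\binom{n}{k}k^{k-2}p^{k-1}(1-p)^{k(n-k)}$ correctly dominates the probability of a component of size $2\le k\le n/2$ (Cayley's formula; the tree edges and the cut edges are disjoint, hence the two requirements are independent), and this sum is indeed $o(1)$ when $a=O(1)$ or $a\to+\infty$, with the dominant contribution coming from $k=2$, of order $(\log n)/n$. Two caveats for a complete write-up. First, your closing claim that $\Pr[G\mbox{ connected}]=\Pr[X_0=0]-o(1)$ in all three regimes is not right as stated: the union-bound sum is \emph{not} $o(1)$ when $a\to-\infty$ (already for $a=-\frac{1}{2}\log n$ its $k=2$ term has order $\log n$). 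This costs you nothing, because for $a\to-\infty$ you only need the trivial inclusion $\{G\mbox{ connected}\}\subseteq\{X_0=0\}$ together with your second-moment argument, but the logic should be stated separately by regime rather than as one identity. Second, the range-splitting estimates you defer are genuinely the bulk of the work; they do go through (each term is at most $e^{k}k^{-2}n(\log n+a)^{k-1}e^{-(\log n+a)k(n-k)/n}$, which decays geometrically in $k$ in the relevant regimes), and a full proof must carry them out rather than leave them as a promissory note.
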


From Theorem \ref{ppt} and the definition of sharp threshold
functions, we can derive the following corollary immediately.
\begin{cor}\label{cor}
$p=\frac{\log n}{n}$ is a sharp threshold function for $G(n,p)$ to
be connected.
\end{cor}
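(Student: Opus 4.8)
The plan is to derive this corollary directly from Theorem \ref{ppt}, exploiting the fact that the additive parameter $a$ in that theorem can absorb any constant multiple of $\log n$ and drive it to $\pm\infty$. Recalling the definition of a sharp threshold function, I must exhibit two positive constants $c$ and $C$ such that $G(n,r(n))$ is connected w.h.p.\ whenever $r(n)\ge C\cdot\frac{\log n}{n}$, and $G(n,r'(n))$ is disconnected w.h.p.\ whenever $r'(n)\le c\cdot\frac{\log n}{n}$. I would simply take $C=2$ and $c=\frac12$.

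For the upper (connectivity) direction, I would first establish connectedness at the boundary value $p_0=C\frac{\log n}{n}$. Writing $p_0=\frac{\log n+a}{n}$ forces $a=(C-1)\log n$, which tends to $+\infty$ since $C>1$; hence Theorem \ref{ppt} gives $Pr[G(n,p_0)\text{ is connected}]\to 1$. To pass from this single value to the whole range $r(n)\ge p_0$, I invoke the monotonicity of connectedness under edge addition: the probability that $G(n,p)$ is connected is nondecreasing in $p$ (via the standard coupling of $G(n,p)$ and $G(n,p')$ for $p\le p'$), so $Pr[G(n,r(n))\text{ connected}]\ge Pr[G(n,p_0)\text{ connected}]\to 1$.

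For the lower (disconnectivity) direction, I would argue symmetrically at $p_1=c\frac{\log n}{n}$. Here $a=(c-1)\log n\to -\infty$ because $c<1$, so Theorem \ref{ppt} yields $Pr[G(n,p_1)\text{ is connected}]\to 0$, i.e.\ $G(n,p_1)$ is disconnected w.h.p. Again by monotonicity, for any $r'(n)\le p_1$ we have $Pr[G(n,r'(n))\text{ connected}]\le Pr[G(n,p_1)\text{ connected}]\to 0$, whence $G(n,r'(n))$ is disconnected w.h.p.

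There is no genuine obstacle here: the corollary is essentially a repackaging of Theorem \ref{ppt} into the language of sharp thresholds, and the only mild point is the use of the coupling/monotonicity argument to replace the single boundary probability by the full ranges $r(n)\ge C\frac{\log n}{n}$ and $r'(n)\le c\frac{\log n}{n}$ demanded by the definition.
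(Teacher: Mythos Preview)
Your proposal is correct and is precisely the argument the paper has in mind: the paper simply states that the corollary follows ``immediately'' from Theorem \ref{ppt} and the definition of a sharp threshold, and what you have written is exactly that derivation spelled out (choosing any $C>1$ and $c<1$, reading off $a\to\pm\infty$, and using monotonicity of connectedness to pass from the boundary probabilities to the full ranges).
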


Now we prove Theorem \ref{thm1}. According to the range of $f(n)$,
we have the following two cases.

\noindent{\bf Case 1.} $\ell n \log n\leq f(n)<\frac{1}{2}n(n-1)$,
where $\ell\in \mathbb{R}^+$.

To establish a sharp threshold function for a graph property, the
proof should be two-folds. We first show one direction.

\begin{thm}\label{t2}
There exists a constant $C$ such that
$mc\left(G\left(n,C\frac{f(n)+n\log\log n}{n^2}\right)\right)\ge
f(n)$ w.h.p. holds.
\end{thm}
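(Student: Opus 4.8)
The plan is to bound $mc$ from below purely by the number of edges, using Theorem \ref{low}. Since $mc(G)\ge m-n+2$ for every \emph{connected} graph $G$, it suffices to show that, with $p=C\frac{f(n)+n\log\log n}{n^2}$ for a suitably large constant $C=C(\ell)$, the random graph $G(n,p)$ is w.h.p. connected \emph{and} w.h.p. has at least $f(n)+n-2$ edges; on the intersection of these two events we immediately get $mc(G(n,p))\ge m-n+2\ge f(n)$. If the formula gives $C\frac{f(n)+n\log\log n}{n^2}>1$ we simply take $p=1$, whence $G=K_n$ and $mc(K_n)=\binom n2>f(n)$ holds trivially (recall $f(n)<\frac12 n(n-1)$); so we may assume $p\le 1$, and by monotonicity of the property any larger edge probability is then also fine.

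For connectivity I would use that we are in Case 1, so $f(n)\ge \ell n\log n$, giving $np=C\frac{f(n)+n\log\log n}{n}\ge C\ell\log n$. Writing $np=\log n+a$ we have $a\ge(C\ell-1)\log n$, so choosing $C$ large enough that $C\ell>1$ forces $a\to+\infty$, and Theorem \ref{ppt} (equivalently Corollary \ref{cor}) yields that $G(n,p)$ is connected w.h.p.

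For the edge count, the number $m$ of edges of $G(n,p)$ is a binomial random variable with mean $\mu=\binom n2 p=\frac{n-1}{2n}\,C\big(f(n)+n\log\log n\big)\ge \tfrac14 C f(n)$ for $n\ge 2$. Since $f(n)\ge\ell n\log n$, both $n\log\log n$ and $n-2$ are $o(f(n))$, so $f(n)+n-2\le 2f(n)$ for all large $n$. Hence, taking $C$ large (depending only on $\ell$) makes $\tfrac12\mu\ge \tfrac{C}{8}f(n)\ge f(n)+n-2$, and the lower-tail Chernoff bound of Lemma \ref{lem1} with $\delta=\tfrac12$ gives $\Pr[m<\tfrac12\mu]\le\exp(-\mu/8)=o(1)$, because $\mu\to\infty$. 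Thus $m\ge f(n)+n-2$ w.h.p.

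Combining the two events by a union bound, w.h.p. $G(n,p)$ is connected and has at least $f(n)+n-2$ edges, which gives $mc(G(n,p))\ge f(n)$ and proves Theorem \ref{t2}. The only genuine care needed is bookkeeping: one fixed constant $C=C(\ell)$ must simultaneously meet the connectivity requirement ($C\ell>1$) and the Chernoff requirement ($C$ large), and one must note that the $n\log\log n$ term in the threshold and the $-n+2$ coming from Theorem \ref{low} are both negligible against $f(n)\ge\ell n\log n$. I expect no real obstacle in this direction; the substance of the sharp-threshold statement lies in the matching lower-bound direction and in the $f(n)=o(n\log n)$ regime, where the minimum-degree upper bound of Theorem \ref{upper} must be brought into play.
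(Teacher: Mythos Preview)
Your proposal is correct and follows essentially the same route as the paper: verify connectivity of $G(n,Cp)$ via Theorem~\ref{ppt}, use the lower-tail Chernoff bound (Lemma~\ref{lem1}) with $\delta=\tfrac12$ to guarantee at least $\mu/2$ edges, and then apply the trivial lower bound $mc(G)\ge m-n+2$ of Theorem~\ref{low}. The only cosmetic differences are that the paper fixes the explicit constant $C=5$ (or $C=5/\ell$ when $0<\ell<1$) and retains the $(n-1)\log\log n$ term in the computation rather than absorbing it as $o(f(n))$; your handling of the degenerate case $Cp>1$ is an extra nicety not present in the paper.
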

\begin{proof}
Let
\begin{equation*}
C=
\left\{
  \begin{array}{ll}
   5 & \hbox{ if $\ell\geq 1$} \\
   \frac{5}{\ell} & \hbox{ if $0<\ell<1$}\\
  \end{array}
\right.
\end{equation*}
and $p=\frac{f(n)+n\log\log n}{n^2}$. By Theorem \ref{ppt}, it is
easy to check that $G\left(n,Cp\right)$ is w.h.p. connected. Let
$\mu_1$ be the expectation of the number of edges in
$G\left(n,Cp\right)$. So $$\mu_1=\frac{n(n-1)}{2}\cdot
Cp=\frac{C}{2}\left(\frac{n-1}{n}f(n) +(n-1)\log\log n\right).$$
From Lemma \ref{lem1}, we have
$$\Pr [|E(G(n,Cp))| < \frac{\mu_1}{2} ] \le \exp
\left( { - \frac{1}{2}\cdot\frac{1}{4}\mu_1} \right)
=\exp \left( { - \frac{1}{8}\mu_1}\right)=o(1).$$
Note that if $|E(G(n,Cp))| \ge \frac{\mu_1}{2} $, then by Theorem
\ref{low}, we have that
\begin{align*}
mc\left(G\left(n,Cp\right)\right)&\ge |E(G(n,Cp))|-n+2\\
                                 &\ge  \frac{\mu_1}{2}-n+2\\
                                 &=\frac{C}{4}\left(\frac{n-1}
                                 {n}f(n)+(n-1)\log\log n\right)-n+2\\
                                 &\ge\frac{5}{4}\left(\frac{n-1}
                                 {n}f(n)+(n-1)\log\log n\right)-n+2\\
                                 &\ge f(n),
\end{align*}
for $n$ sufficiently large. Thus, we obtain that with probability at
least $1-\exp \left( { - \frac{1}{8}\mu_1}\right)=1-o(1)$,
$mc\left(G\left(n,Cp\right)\right)\ge f(n)$ holds.
 \end{proof}

Next we show the other direction.

\begin{thm}\label{t3}
$mc\left(G\left(n,\frac{f(n)+n\log\log n}{n^2}\right)\right)< f(n)$
w.h.p. holds.
\end{thm}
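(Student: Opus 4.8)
The plan is to bound $mc$ from above by the number of edges and then exploit the fact that at this value of $p$ the random graph is too sparse to carry $f(n)$ colors. Write $m = |E(G(n,p))|$ with $p = \frac{f(n)+n\log\log n}{n^2}$. By Theorem \ref{upper} we have $mc(G) \le m - n + \delta(G) + 1$, and since every simple graph on $n$ vertices satisfies $\delta(G) \le n-1$, this collapses to the clean bound $mc(G(n,p)) \le m$. Hence it suffices to prove that $m < f(n)$ w.h.p.

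First I would compute the expected number of edges,
\[
\mu := \mathbb{E}[m] = \binom{n}{2}p = \frac{(n-1)\bigl(f(n)+n\log\log n\bigr)}{2n},
\]
and note that $\mu < \tfrac12\bigl(f(n)+n\log\log n\bigr)$. The crucial observation is that in Case 1 we have $f(n) \ge \ell n\log n$, so the correction term is negligible, $n\log\log n = o(f(n))$; consequently $\mu = \bigl(\tfrac12 + o(1)\bigr)f(n)$, which is bounded away from $f(n)$ by a factor close to $2$. More precisely $f(n)/\mu \to 2$, so writing $f(n) = (1+\delta)\mu$ we get $\delta \to 1$, and in particular $\delta$ is bounded away from $0$ for $n$ large.

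Next I would apply the upper-tail Chernoff bound of Lemma \ref{lem1} to the binomial variable $m$. Since $\mu \ge \tfrac{n-1}{2n}\cdot n\log\log n \to \infty$ and $\delta$ stays bounded away from $0$, we obtain
\[
\Pr\bigl[m \ge f(n)\bigr] = \Pr\bigl[m \ge (1+\delta)\mu\bigr] \le \exp\!\left(-\frac{\delta^2\mu}{2+\delta}\right) = o(1).
\]
Thus $m < f(n)$ w.h.p., and combined with $mc(G(n,p)) \le m$ this yields $mc(G(n,p)) < f(n)$ w.h.p., as desired.

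There is no genuinely hard step here; the one point requiring care is to verify the estimates uniformly over the entire range $\ell n\log n \le f(n) < \tfrac12 n(n-1)$. Specifically, I must confirm that the deviation parameter $\delta = f(n)/\mu - 1$ stays bounded away from $0$ and that $\mu \to \infty$ no matter how large $f(n)$ is, both of which follow from $n\log\log n = o(f(n))$ together with $f(n)\ge \ell n\log n$. A sharper version of the same argument, replacing $\delta(G)\le n-1$ with the average-degree bound $\delta(G)\le 2m/n$ in Theorem \ref{upper}, would provide even more slack, but it is not needed for the conclusion.
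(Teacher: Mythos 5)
Your proposal is correct and follows essentially the same route as the paper: both arguments reduce the claim to $mc(G(n,p))\le |E(G(n,p))|$ via Theorem \ref{upper} with the trivial bound $\delta(G)\le n-1$, and then apply the upper-tail Chernoff bound of Lemma \ref{lem1} to show the edge count stays below $f(n)$ w.h.p., using $n\log\log n=o(f(n))$ in this range. The only cosmetic difference is that the paper fixes the deviation parameter at $\tfrac12$ and checks $\tfrac32\mu_2+1<f(n)$ deterministically, whereas you let $\delta=f(n)/\mu-1\to 1$; this changes nothing of substance.
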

\begin{proof}
Let $p=\frac{f(n)+n\log\log n}{n^2}$ and $\mu_2$ be the expectation
of the number of edges in $G\left(n,p\right)$. We have
$$\mu_2=\frac{n(n-1)}{2}\cdot
p=\frac{1}{2}\left(\frac{n-1}{n}f(n)+(n-1)\log\log n\right).$$ We
obtain that
$$\Pr [|E(G(n,p))| > \frac{3}{2}\mu_2 ] \le \exp
\left( { - \frac{\frac{1}{4}\mu_2}{2+\frac{1}{2}}} \right) =\exp
\left( { - \frac{1}{10}\mu_2}\right)=o(1)$$ by Lemma \ref{lem1}. If
$G(n,p)$ is not connected, then $mc\left(G\left(n,p\right)\right)=0<
f(n)$. If $G(n,p)$ is connected, let $d$ denote the minimum degree
of $G(n,p)$, it is obvious that $d<n$. If $|E(G(n,p))| \le
\frac{3}{2}\mu_2 $, then from  Theorem \ref{upper}, we have  that
\begin{align*}
mc\left(G\left(n,p\right)\right)&\le |E(G(n,p))|-n+d+1\\
                                 &\le  \frac{3}{2}\mu_2-n+d+1\\
                                 &=\frac{3}{4}\left(\frac{n-1}{n}f(n)
                                 +(n-1)\log\log n\right)-n+d+1\\
                                 &<\frac{3}{4}\left(\frac{n-1}{n}f(n)
                                 +(n-1)\log\log n\right)-n+n+1\\
                                 &< f(n).
\end{align*}
Hence, we have that with probability at least $1-\exp
\left( { - \frac{1}{10}\mu_2}\right)=1-o(1)$,
$mc\left(G\left(n,p\right)\right)< f(n)$ holds.
 \end{proof}
\noindent{\bf Case 2.} $f(n)=o(n\log n)$ or $f(n)$ is a constant.

By Corollary \ref{cor} we have that there exist two positive
constants $c_1$ and $c_2$ such that: for every $r(n) \geq c_1\cdot
p$, $G(n, r(n))$ is w.h.p. connected; and for every $r'(n) \leq
c_2\cdot p$, $G(n, r'(n))$ is w.h.p. not connected. Moreover, for
$r(n) \geq c_1\cdot p$, $|E(G(n, r(n)))|=O(n\log n)$ by Lemma
\ref{lem1}. Hence, $mc(G(n, r(n)))\geq |E(G(n, r(n)))|-n+2\geq
f(n)$. On the other hand, since $G(n, r'(n))$ is w.h.p. not
connected, for every $r'(n) \leq c_2\cdot p$, $mc(G(n, r'(n)))=0<
f(n)$ w.h.p. holds.

Combining Case 1 and Case 2, our main result follows.
 \qed\\


\begin{thebibliography}{1}

\bibitem{AS}
N. Alon, J. Spencer, {\it The Probabilistic Method},
Wiley-Interscience Series in Discrete Mathematics and Optimization,
John Wiley \& Sons, Inc., Third Edition, 2008.

\bibitem{BM}
J.A. Bondy, U.S.R. Murty, {\it Graph Theory}, GTM 244, Springer,
2008.

\bibitem{BT}
B. Bollob\'{a}s, A. Thomason, Threshold functions, {\it Combinatorica},  {\bf 7} (1986) 35--38.

\bibitem{Caro}
Y. Caro, A. Lev, Y. Roditty, Z. Tuza, R. Yuster, On rainbow
connection, {\it  Electron. J. Comb.}, {\bf 15} (2008), \#R57.

\bibitem{CJMZ}
G. Chartrand, G.L. Johns, K.A. McKeon, P. Zhang, Rainbow connection
in graphs,  {\it Math. Bohem}., {\bf 133} (2008) 85--98.

\bibitem{CY}
Y. Caro, R. Yuster, Colorful monochromatic connectivity, {\it
Discrete Math.}, \textbf{311} (2011) 1786--1792.

\bibitem{ER}
P. Erd\"{o}s, A. R\'{e}nyi, On the evolution of random graphs, {\it Publ.
Math. Inst. Hungar. Acad. Sci.}, \textbf{5} (1960) 17--61.

\bibitem{FK}
E. Friedgut, G. Kalai, Every monotone graph property has a sharp
threshold, {\it Proc. Amer. Math. Soc}. {\bf 124} (1996)
2993--3002.

\bibitem{HL}
J. He, H. Liang, On rainbow-k-connectivity of random graphs, {\it
Inform. Process. Lett.}  {\bf 112} (2012) 406--410.

\bibitem{LS}
X. Li, Y. Sun, {\it Rainbow Connections of Graphs}, SpringerBriefs
in Math., Springer, New York, 2012.

\bibitem{LSS} X. Li, Y. Shi, Y. Sun, Rainbow connections of
graphs: A survey, Graphs \& Combin. {\bf 29(1)} (2013) 1-38.


\end{thebibliography}
\end{document}